\providecommand{\U}[1]{\protect\rule{.1in}{.1in}}
\newtheorem{theorem}{Theorem}
\newtheorem{definition}[theorem]{Definition}
\newenvironment{proof}[1][Proof]{\textbf{#1.} }{\ \rule{0.5em}{0.5em}}
\newtheorem{alphatheorem}{Theorem}
\begin{document}

\title{Simultaneous elections in a polarized society make single-party sweeps more likely}
\author{Pradeep Dubey\thanks{Center for Game Theory, Economics Department, Stony Brook
University, Stony Brook, NY} \thinspace and Siddhartha Sahi\thanks{Department
of Mathematics, Rutgers University, New Brunswick, NJ} \thanks{The research of
S.S. was supported in part by Simons Foundation grant 00006698.}}
\date{}
\maketitle

\section*{Abstract}

In a country with many elections, it may prove economically expedient to
hold multiple elections simultaneously on a common polling date. We show that 
in a polarized society, in which each voter has a preferred party, an increase in 
the simultaneity of polling will increase the likelihood of a single-party
sweep, namely, it will become more likely that a single party wins all the elections. 

In fact we show that the sweep probability goes up for \emph{every} party. Thus the 
phenomenon we describe is independent of the ``coattail'' or ``down-ballot'' 
effect of a popular leader. It is a \emph{systemic} and \emph{persistent} 
macroscopic political change, effected by a combination of political polarization and 
simultaneity of polling.
   
Our result holds under fairly general conditions and is
applicable to many common real-world electoral systems, including
\emph{first-past-the-post} (most voters) and \emph{party list proportional
representation} (most countries). In the course of our proof, we obtain a
generalization of the well-known Harris correlation inequality.

\textbf{MSC Classification: }60C05 60E15 91B12. \quad\textbf{JEL
Classification: } D72

\textbf{Keywords: } Simultaneous elections, single-party sweeps, Harris inequality.

\section{Introduction\label{simult elections}}


Most major democracies around the world conduct multiple elections, allowing
citizens to engage in the democratic process at regular intervals across
various levels of government. Americans participate in elections every two
years to choose a member of the House of Representatives, every four years to
elect a President, and twice within a six-year period to vote for two
Senators; additionally, various local and municipal elections occur throughout
these cycles. In India, the national parliament and state assemblies are
legally mandated to have five-year terms; however, as in many parliamentary
democracies, the fall of a government can lead to elections being held earlier
than the scheduled term.

Some elections are tradionally held together with others, so that a voter on a
given poll date might cast votes in multiple elections. For example, the three
major American election cycles are synchronized so that every other House
election coincides with a Presidential election, and, for each voter, two out
of three House elections also involve a senatorial candidate on the ballot. In
India, the national and state elections were held simultaneously until 1967
but the premature fall of various governments have caused them to become
unsynchronized over the years.

Simultaneous elections reduce cost, time, and effort, and consequently there
is a strong economic argument to be made in their favor. The ruling party in
India, has proposed a constitutional amendent---``One Nation One
Election''---which seeks to mandate the simultaneity of national and state
elections. This has been opposed by other parties out of concern that the
current Prime Minister's popularity might increase the state-level dominance
of the ruling party. Indeed, the tendency of a popular leader to attract votes
for other candidates of the same party is a well-known and much-studied
phenomenon in the American context, where it is referred to as a ``coattail''
or ``down-ballot'' effect.

A separate development, observed and studied in both the American and Indian
contexts, is that of increasing \textquotedblleft political
polarization\textquotedblright\footnote{See, e.g., \cite{Bertoa},
\cite{Hetzel}, \cite{Sahoo} and the references therein.}. This refers to a
tendency of voters to identify closely with a political party, with a
corresponding reduction in the number of \textquotedblleft
uncommitted\textquotedblright\ or \textquotedblleft
issue-based\textquotedblright\ voters. At the ballot level this results in
more \textquotedblleft party-line\textquotedblright\ votes, with relatively
few \textquotedblleft split\textquotedblright\ or \textquotedblleft
cross-over\textquotedblright\ votes. While the outcome of an election depends
on who shows up to vote, and how they choose to vote; in a polarized society
the second factor is much less important. The political parties have a clear
idea of who their core supporters are, and much of the election effort is
focused on a get-out-the-vote campaign, targeted towards getting these
supporters show up on election day.

In this paper we study the likelihood of a \textquotedblleft single-party
sweep\textquotedblright; namely, given some set of elections, what is the
probability that \emph{a single party wins all of them?} In order for this to
be a well-defined question, one must clarify what it means to win an election.
In some real-world situations, such as a presidential contest or a two-party
parliamentary election, there is an unambiguous notion of what constitutes a
\textquotedblleft win\textquotedblright; in other settings, there might be
several natural candidates.

For example, in the case of Indian state elections, one might be interested in
analyzing either the total number of assembly seats won, or the total number
of state governments formed by a party. Alternatively one might choose to
focus on a single state, examining whether a single party wins majority in
that state as well as the national level. The ruling party in India refers to
this as ``double engine governance'', and portrays it as a favorable outcome
that accelerates developmental initiatives within the state\footnote{The Indian Railways 
occasionally operates trains with dual engines to enhance speed and reliability, particularly 
when handling heavier loads.}. For maximal
applicability of our analysis, we choose to work with a somewhat abstract
notion of a \emph{win}, subject only to two mild requirements, which fits all
these scenarios and many more besides. \smallskip

\emph{Our main finding is that in a country with a polarized electorate, a
more simultaneous polling schedule increases the likelihood of a single-party
sweep. } \smallskip

We make this statement precise below in three scenarios of increasing
generality. Our most general scenario allows for several parties and staggered
polling, which is often the case in Indian elections. Although our two
motivating examples, the US and India, both involve first-past-the-post
voting, \emph{this is not assumed in our model}, and our results are equally
applicable to several other major electoral systems, including party list
proportional representation.

Finally, it is worth noting that we actually prove a stronger result below;
namely, that probability of a sweep goes up for every party. Thus the effect
we describe is systemic in nature and does not depend on the coattails of a
popular leader. It stems solely from statistical aggregates of individual
voters' actions, and is best regarded as a macroscopic manifestion of
political polarization. Our model brings to light a novel cause for sweeps
which is not at odds with other causes discussed in the literature, but rather
independent of them.

\subsection{Main results}

Country A has two political parties and two national elections, Presidential
and Parliamentary; in each election the candidate with the most votes wins,
with ties decided by a coin toss. The election commission is considering
whether to hold the elections simultaneously or on separate polling dates.
Each voter $h$ has a fixed probability $p_{h}$ of voting on any poll
date\footnote{This {assumption is in the spirit} of Banzhaf \cite{Banzhaf} and
Penrose \cite{Penrose} ({see section \ref{Random Turnout}).}}, and the
population is \emph{polarized} in the sense that each voter prefers the same
party for both elections. A \emph{sweep} is an outcome where one party wins
both the presidency and a parliamentary majority.

\begin{alphatheorem}
\label{ThA} A simultaneous election makes a sweep more \footnote{Throughout,
{\textquotedblleft more\textquotedblright} is to be understood in the weak
sense of \textquotedblleft greater than or equal to\textquotedblright.} likely.
\end{alphatheorem}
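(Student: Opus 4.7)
The plan is to condition on the turnout profile and reduce the inequality to the non-negativity of variance. Let $T_h$ be the Bernoulli$(p_h)$ indicator that voter $h$ turns out on a single poll date, independent across $h$, and write $T = (T_h)_h$. In the simultaneous regime the same realization $T$ drives both elections; in the separate regime one uses two independent copies $T^{(1)}, T^{(2)}$, each with the distribution of $T$. Since polarization means every voter who shows up votes for her fixed party in both elections, and since both elections use plurality, the winner of election $j$ is $W_j = g(T^{(j)}, C_j)$ for a single fixed function $g$, where $C_j$ is an independent fair coin used to break ties. Set $f_k(t) := \Pr_c[g(t,c) = k]$, which lies in $\{0,\tfrac{1}{2},1\}$ according to whether party $k$ strictly loses, ties, or strictly wins at turnout $t$.

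Next I would factor out the independent tie-break coins. Conditioning on $T$ and using $C_1 \perp C_2$ in the simultaneous regime gives
\[
\Pr_{\mathrm{sim}}[W_1 = k,\; W_2 = k] = \mathbb{E}_T\bigl[f_k(T)^{2}\bigr].
\]
In the separate regime, $W_1$ and $W_2$ are independent because they depend on independent data, hence
\[
\Pr_{\mathrm{sep}}[W_1 = k,\; W_2 = k] = \bigl(\mathbb{E}_T[f_k(T)]\bigr)^{2}.
\]
Summing the sweep probability over $k \in \{1,2\}$ and invoking $\mathbb{E}[f_k(T)^{2}] \geq (\mathbb{E}[f_k(T)])^{2}$ for each $k$, i.e.\ the non-negativity of $\operatorname{Var} f_k(T)$, yields the theorem. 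As a bonus, the same computation gives the per-party statement $\Pr_{\mathrm{sim}}[W_1 = k, W_2 = k] \geq \Pr_{\mathrm{sep}}[W_1 = k, W_2 = k]$ for each $k$ individually, matching the stronger claim flagged in the abstract.

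The main obstacle is conceptual rather than computational. One has to recognize that polarization plus a common voting rule collapses the comparison between the two electoral schedules to the single question ``shared turnout vector vs.\ two independent copies,'' after which Jensen is almost tautological. In the more general settings promised later in the paper --- several parties, staggered polling, and heterogeneous winner criteria across elections --- this clean reduction fails: one must instead compare $\mathbb{E}_T[f_{j,k}(T)\, f_{j',k}(T)]$ against $\mathbb{E}[f_{j,k}]\,\mathbb{E}[f_{j',k}]$ for \emph{distinct} monotone indicators sharing the same turnout variables. The natural tool there is a Harris/FKG-type correlation inequality, which, as the abstract hints, the paper ends up extending.
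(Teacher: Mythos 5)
Your proof is correct for Theorem~\ref{ThA} as stated, but it takes a genuinely different and more elementary route than the paper. The paper never proves Theorem~\ref{ThA} directly: it proves the general correlation inequality Theorem~\ref{ThD} (for an \emph{aligned} tuple of nonnegative functions, coarsening the turnout partition raises the expectation of the product), deduces Theorem~\ref{ThC}, and lets \ref{ThA} fall out as a special case. The engine of Theorem~\ref{ThD} is a one-voter-at-a-time conditional computation, $A-B=p(1-p)(a_1-b_1)(a_2-b_2)c\ge 0$, i.e.\ a Harris-type argument exploiting monotonicity. You instead exploit a degeneracy specific to the two-party, two-plurality-election setting of Theorem~\ref{ThA}: since both elections apply the \emph{same} rule to the \emph{same} polarized electorate, the two win-probability functions coincide, $f_{1,k}=f_{2,k}=f_k$, and the comparison collapses to $\mathbb{E}[f_k(T)^2]\ge(\mathbb{E}[f_k(T)])^2$, which needs no monotonicity at all --- only nonnegativity of a variance. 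This buys a shorter, assumption-free proof of \ref{ThA} (and, as you note, of the stronger per-party statement), but it does not scale: the moment the two elections have different win criteria or different effective electorates (Theorems~\ref{ThB} and~\ref{ThC}), you must bound $\mathbb{E}[f_{1,k}(T)f_{2,k}(T)]$ below by $\mathbb{E}[f_{1,k}(T)]\,\mathbb{E}[f_{2,k}(T)]$ for \emph{distinct} functions, which is false without the alignment/monotonicity hypothesis and is exactly the Harris/FKG-type inequality the paper generalizes in Theorem~\ref{ThD}. You correctly identify this boundary yourself, so the only caveat is one of scope, not of correctness: your argument silently uses that the parliamentary ``win'' in country A is decided by the same national plurality count as the presidency; under a district-based notion of parliamentary majority your $g$ would differ between the two elections and the Jensen step would no longer apply, though both functions would still be aligned and the paper's argument would go through.
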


Country B has several political parties, and several elections with different
notions of what constitutes a {\textquotedblleft win\textquotedblright}.
However the following two conditions hold.

(a) Each election can be ``won'' by \emph{at most} one party.

(b) For any turnout in any election, an extra vote for some party cannot
decrease its win probability\footnote{We speak of ``win probabilities'' since,
for example, tied vote counts might lead to coin tosses.}, nor can it increase
the win probability of a rival party.

The election commission is exploring various polling schedules. We will say
that schedule $\pi$ is \emph{coarser} than schedule $\pi^{\prime}$ if any two
elections that are simultaneous (have the same poll date) in $\pi^{\prime}$
are also simultaneous in $\pi$. As before, each voter $h$ has a fixed
probability $p_{h}$ of voting on any poll date and always votes for the same
party. A \emph{sweep} is an outcome in which one party wins all the elections.

\begin{alphatheorem}
\label{ThB} A coarser polling schedule makes a sweep more likely.
\end{alphatheorem}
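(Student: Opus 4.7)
My plan is to prove the stronger per-party statement: for each party $P$, coarsening weakly increases the probability that $P$ wins every election. Since by (a) the sweeps by distinct parties are disjoint events, Theorem B then follows by summing over $P$. Any coarsening decomposes into a sequence of \emph{elementary} merges, each identifying two distinct poll dates $d_1,d_2$ of the finer schedule into a single date $d^\ast$ of the coarser one, so by transitivity it suffices to handle one such merge. Fix it and condition on all turnouts and tie-breaking randomness for the unaffected dates; because different dates carry independent randomness and the elections on those unaffected dates are identical in both schedules, the conditional sweep probability factors as $\mathbb{P}(S_\ast)\cdot R$ with $R$ common to the two schedules, where $S_\ast$ is the event \textquotedblleft$P$ wins every election in $E_1\cup E_2$\textquotedblright. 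It therefore suffices to compare $\mathbb{P}(S_\ast)$ across the two schedules.

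Let $\mathbf{x}=(x_h)_h$ be the turnout indicator vector on a single day, a vector of independent Bernoulli$(p_h)$ coordinates. In the unmerged schedule, the votes cast in $E_1$ and $E_2$ are driven by two \emph{independent} copies $\mathbf{x},\mathbf{y}$ of this distribution; in the merged schedule, a single copy $\mathbf{x}$ drives both sets. Write $\sigma(h)$ for $h$'s fixed party and $v(\mathbf{x})$ for the resulting vote vector with components $v_Q(\mathbf{x})=\sum_{h:\sigma(h)=Q}x_h$, and set
\[
 g_j(\mathbf{x}) \;=\; \mathbb{P}\bigl(P\text{ wins every election in }E_j\mid\text{turnout}=\mathbf{x}\bigr)
 \;=\; \prod_{i\in E_j} q_{P,i}\bigl(v(\mathbf{x})\bigr),
\]
where $q_{P,i}(v)$ is $P$'s win probability in election $i$ under vote vector $v$; the product form uses the natural background assumption that tie-breaking randomness is independent across elections. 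The two sweep probabilities to compare are $\mathbb{E}[g_1(\mathbf{x})\,g_2(\mathbf{x})]$ (merged) and $\mathbb{E}[g_1(\mathbf{x})]\,\mathbb{E}[g_2(\mathbf{y})]=\mathbb{E}[g_1]\,\mathbb{E}[g_2]$ (unmerged). By hypothesis (b), each $q_{P,i}(v)$ is non-decreasing in $v_P$ and non-increasing in $v_{P'}$ for every rival $P'$, so each $g_j(\mathbf{x})$ is non-decreasing in $x_h$ when $\sigma(h)=P$ and non-increasing in $x_h$ when $\sigma(h)\neq P$.

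It remains to establish $\mathbb{E}[g_1\,g_2]\ge\mathbb{E}[g_1]\,\mathbb{E}[g_2]$ for such mixed-monotone functions of independent Bernoulli variables. The classical Harris/FKG inequality delivers this when both $g_j$ are coordinatewise non-decreasing; the mixed case reduces to it by substituting $\tilde x_h=1-x_h$ for every $h$ with $\sigma(h)\neq P$, an operation that preserves the independent Bernoulli structure (with parameters flipped on those coordinates) and turns $g_1,g_2$ simultaneously into non-decreasing functions of $\tilde{\mathbf{x}}$. Combining with the reduction in the first paragraph yields the per-party inequality, and hence Theorem B. I expect the main obstacle to lie not in the Harris step but in the step above it: hypothesis (b) is stated at the level of win \emph{probabilities} rather than pathwise outcomes, so to obtain genuine coordinatewise monotonicity of $g_j$ one must set up the auxiliary tie-breaking randomness so that the event \textquotedblleft$P$ wins election $i$\textquotedblright\ is a monotone function of the vote vector for almost every realization of that randomness. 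Making this careful pathwise monotonicity interact cleanly with the multi-party, multi-election correlation inequality is likely the \textquotedblleft generalization of the Harris correlation inequality\textquotedblright\ promised in the abstract.
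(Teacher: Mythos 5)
Your proof is correct, and its skeleton (per-party statement, disjointness via condition (a), reduction to an elementary merge of two poll dates, then a correlation inequality) matches the paper's; but the key step is genuinely different. The paper does not invoke the classical Harris inequality: it proves a self-contained Theorem D by peeling off \emph{one voter's coin at a time} --- conditioning on all other randomness and comparing one toss for $M_1\cup M_2$ against two tosses for $M_1,M_2$, which reduces to the elementary identity $A-B=p(1-p)(a_1-b_1)(a_2-b_2)c\ge 0$ for an aligned tuple. You instead compare the two schedules in one shot on the merged date, writing the merged sweep probability as $\mathbb{E}[g_1g_2]$ and the unmerged one as $\mathbb{E}[g_1]\mathbb{E}[g_2]$, and citing two-function Harris after flipping the coordinates of rival supporters. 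For Theorem B this is perfectly legitimate and more economical; what the paper's route buys is (i) the staggered Theorem C, where the merge happens inside a single voter's personal partition of the elections and the natural object is an $n$-tuple of aligned functions rather than two functions of one turnout vector, and (ii) a new proof of Harris as a corollary rather than an input. Two small remarks. First, your closing worry about pathwise monotonicity of the tie-breaking is a non-issue: Harris applies to any coordinatewise monotone functions, and each $g_j(\mathbf{x})$, being a product of the nonnegative monotone win probabilities $q_{P,i}$, is already monotone of the required mixed type directly from hypothesis (b); no coupling of the auxiliary randomness is needed. (The ``generalization of Harris'' promised in the abstract is the $n$-function Theorem D, not a statement about tie-breaks.) Second, your product formula for $g_j$ implicitly assumes the tie-breaking randomness is independent across elections given the turnout; the paper makes the same implicit assumption when it writes the sweep probability as $\sum_T \bar F(T)\mu_{\Pi,\mathbf{p}}(T)$ with $\bar F=\prod_l f_l$, so you are consistent with it, but it is worth stating.
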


Country C has a setup similar to country B. However the election commission is
considering \emph{staggered} schedules in which each election might be spread
out over several days, with different poll dates for different regions. We
will say that a staggered schedule $\Pi$ is coarser than $\Pi^{\prime}$ if, for every voter $h$, any
two elections that are simultaneous for $h$ in $\Pi^{\prime}$ are also
simultaneous for $h$ in $\Pi$.

\begin{alphatheorem}
\label{ThC} A coarser staggered schedule makes a sweep more likely.
\end{alphatheorem}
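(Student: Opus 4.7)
The plan is to reduce Theorem~\ref{ThC} to an elementary single-voter merge step, and then settle that step via a supermodular Harris/FKG-style calculation. Fix a party $P$ and let $S_P$ denote the event that $P$ wins every election. By condition~(a), the events $\{S_P\}$ are pairwise disjoint across parties, so $\mathbb{P}(\text{sweep}) = \sum_P \mathbb{P}(S_P)$, and it suffices to prove $\mathbb{P}^{\Pi}(S_P) \geq \mathbb{P}^{\Pi'}(S_P)$ for each $P$ separately. For every voter $h$, a staggered schedule partitions the elections into \emph{poll-date blocks} for $h$, and the hypothesis ``$\Pi$ coarser than $\Pi'$'' is precisely that this partition becomes coarser at every voter. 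Any such global coarsening can be realized by a finite sequence of elementary merges, each fusing two blocks for a single voter, so it suffices to show that a single elementary merge weakly increases $\mathbb{P}(S_P)$.

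Fix a voter $h$ and two of $h$'s blocks $B_1,B_2$ to merge. Condition on the turnout of every voter other than $h$ and on the turnout of $h$ in every block other than $B_1,B_2$. Before the merge, $h$'s turnout indicator for $B_i$ is $Z_i \sim \mathrm{Ber}(p_h)$ with $Z_1 \perp Z_2$; after the merge, both blocks share a single $Z \sim \mathrm{Ber}(p_h)$. Since every voter votes for a fixed party and tie-breaking is independent across elections, the conditional sweep probability depends only on the pair of turnout values $(z_1,z_2)$ for $h$ and factors as
\[
g(z_1,z_2) \;=\; C\,\Phi_1(z_1)\,\Phi_2(z_2),
\]
where $C \geq 0$ absorbs the elections outside $B_1 \cup B_2$, and $\Phi_i(z)$ is the product, over $e \in B_i$, of the probability that $P$ wins $e$ given that $h$ turns out with value $z$ on block $B_i$ (plus the conditioning). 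By condition~(b), each $\Phi_i(z)$ is monotone in $z$ in the \emph{same} direction across $i$: weakly increasing if $h$ supports $P$, weakly decreasing otherwise. Hence $\Phi_1(1)-\Phi_1(0)$ and $\Phi_2(1)-\Phi_2(0)$ share a sign, giving
\[
g(0,0) + g(1,1) - g(0,1) - g(1,0) \;=\; C\bigl(\Phi_1(1)-\Phi_1(0)\bigr)\bigl(\Phi_2(1)-\Phi_2(0)\bigr) \;\geq\; 0.
\]
A direct computation then yields
\[
\mathbb{E}[g(Z,Z)] - \mathbb{E}[g(Z_1,Z_2)] \;=\; p_h(1-p_h)\bigl[g(0,0)+g(1,1)-g(0,1)-g(1,0)\bigr] \;\geq\; 0,
\]
which is the required monotonicity. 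Removing the conditioning, iterating over the sequence of merges, and summing over $P$ completes the proof.

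The main obstacle is the conditioning bookkeeping that packages the conditional sweep probability into the product form $\Phi_1(z_1)\Phi_2(z_2)$; once one commits to conditioning on \emph{everything} except the two Bernoulli's being merged, the argument collapses to the sign calculation above. The conceptual key---likely the ``generalization of the Harris inequality'' promised in the abstract---is that condition~(b) delivers a \emph{uniform} monotonicity direction across all elections from the vantage of a fixed voter-party pair, which is exactly what keeps the relevant product supermodular even though an extra vote for a rival affects $P$'s win probability in the \emph{opposite} direction to an extra vote for $P$ itself.
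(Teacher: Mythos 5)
Your proof is correct and follows essentially the same route as the paper: reduce to an elementary merge of two poll-date blocks for a single voter, condition on all other turnout randomness, and observe that alignment of the win-probability functions (from condition (b)) makes the product supermodular, so the difference of expectations is $p_h(1-p_h)\bigl(\Phi_1(1)-\Phi_1(0)\bigr)\bigl(\Phi_2(1)-\Phi_2(0)\bigr)C \ge 0$. The only difference is presentational: the paper packages this calculation as an abstract statement (Theorem~\ref{ThD}) about aligned tuples of nonnegative functions, which it then also reuses to reprove the Harris inequality, whereas you carry out the identical computation directly in the election setting.
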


In a country with many political parties, it often happens that several
parties come together in an {alliance} in order to form a government. Here one
can distinguish two kinds of alliances -- a pre-poll alliance which involves
seat sharing, and a post-poll alliance in which a group of parties comes
together after having fought the elections separately, perhaps competing
against each other for more seats. In this case, one may ask whether Theorem
C continues to hold if we treat each {alliance} as single party. In section
\ref{alliances} we show that is indeed the case for many real-world electoral systems.

Regarding the proofs of these results, it is evident that Theorem \ref{ThC} implies  
Theorem \ref{ThB}, which in turn implies Theorem \ref{ThA}. We will deduce Theorem \ref{ThC} 
from Theorem \ref{ThD}, a mathematical result that we formulate and prove 
below. Theorem \ref{ThD} also yields a new proof of the Harris inequality -- 
a well-known correlation inequality with many mathematical
applications \cite{Alon-Spencer, Harris}. As our proof shows, Theorem
\ref{ThD} may be regarded as a generalization of the Harris inequality to $n$
functions.\footnote{The Harris inequality is a special case of the still more
general FKG inequality \cite{FKG}. One of the authors of the present paper has
proposed a different generalization of the FKG inequality to $n$ functions and
obtained some partial results in this direction. However the general problem
remains open, and seems much harder than the generalization considered here;
see \cite{LiebSahi, rich, sahi, sahi2, sahi3}.}

\subsection{Structure of the paper}

In section \ref{sec:C} we formulate and prove Theorem \ref{ThD}, and 
deduce Theorem \ref{ThC}. We also provide a new proof of the 
Harris inequality, formulated here as Theorem \ref{ThE}. 

In Section \ref{Random Turnout} we compare our model of random voter turnout to
other models of voter behavior. In Section \ref{electoral systems} we discuss
the applicability of our results to various electoral systems that are in use
around the world, and have been studied extensively in the theory of Social
Choice \cite{Arrow, Fishburn}. In Section \ref{alliances} we extend 
Theorem \ref{ThC} to party alliances, and in Section \ref{Sweeps} we contrast our 
anaysis of sweeps to the earlier literature. Finally, in Sections \ref{Coattail} and
\ref{ONOE} we discuss the implications of our results for elections in the USA and India, respectively.

\section{Proofs}

\label{sec:C} We will deduce Theorem \ref{ThC} from Theorem \ref{ThD}, which
we now formulate and prove.

Let $L=\{1,\ldots,n\}$ be a finite set, let $\pi$ be a partition of $L$, and
let $\mathcal{L}$ be the power set of $L$. Given $0\le p\le1$, construct a
random subset $S\in\mathcal{L}$ as follows: for each $M$ in $\pi$, toss a coin
with probability $p$ of heads; if heads then include $M$ in $S$.

\begin{definition}
We write $\mu_{\pi,p}$ for the probability measure on $\mathcal{L}$ such that
$\mu_{\pi,p}(S)$ is the probability of obtaining the subset $S$ by the above
random procedure.
\end{definition}

Let $H=\{1,\ldots,m\}$ be another finite set, and let $\mathcal{H}$ and
$\mathcal{T}$ be the power sets of $H$ and $H\times L$. Then we have
bijections $\lambda: \mathcal{L}^{m} \rightarrow\mathcal{T}, \; \theta:
\mathcal{H}^{n} \rightarrow\mathcal{T}$, given by
\[
\lambda(L_{1},\ldots,L_{m})=\bigcup\nolimits_{h} \{(h,l):l\in L_{h}\},
\quad\theta(H_{1},\ldots,H_{n}) =\bigcup\nolimits_{l} \{(h,l):h\in H_{l}\}.
\]

\begin{definition}
\label{def-align} We say that a (real-valued) function $f$ on $\mathcal{H}$ is
\emph{increasing} (resp.~\emph{decreasing}) at $h\in H$, if for all $S\subset
H$ one has $f(S \cup\{h\})- f(S)\ge 0$ \big(resp.~$\le 0$\big).

We say that  $F=(f_{1},\ldots, f_{n})$ is an \emph{aligned} tuple of functions 
on $\mathcal{H}$  if, for each $h\in H$, either all the $f_{l}$ are increasing at $h$,
or all are decreasing at $h$.
\end{definition}

Let $\Pi=(\pi_{1},\ldots, \pi_{m})$, $\mathbf{p}=(p_{1},\ldots, p_{m})$,
$F=(f_{1},\ldots, f_{n})$ be tuples such that each $\pi_{h}$ is a partition of
$L$, each $p_{h}$ is in $[0,1]$, and each $f_{l}$ is a function on
$\mathcal{H}$. We write $\mu_{\Pi, \mathbf{p}}$ and $\bar{F}$ for the product
measure and function on $\mathcal{T}$ induced by $\lambda$ and $\theta$:
\[
\mu_{\Pi, \mathbf{p}}(L_{1},\ldots, L_{m}):=\mu_{\pi_{1},p_{1}}(L_{1})
\cdots\mu_{\pi_{m},p_{m}}(L_{m}), \; \bar{F}(H_{1},\ldots, H_{n}):=f_{1}%
(H_{1})\cdots f_{n}(H_{n}).
\]

\begin{definition}
For $\Pi$, $\mathbf{p}$, $F$ as above, we define $E(\Pi,\mathbf{p}%
,F):=\sum\nolimits_{T\in\mathcal{T}} \bar{F}(T)\mu_{\Pi, \mathbf{p}}(T).$
\end{definition}

\begin{alphatheorem}
\label{ThD} Suppose $F=(f_{1},\ldots,f_{n})$ is an aligned tuple of
nonnegative functions on $\mathcal{H}$. If $\;\Pi^{\prime}$ is coarser than
$\Pi$ then we have $E(\Pi^{\prime},\mathbf{p},F)\ge E(\Pi,\mathbf{p},F)$ for
all $\mathbf{p}$.
\end{alphatheorem}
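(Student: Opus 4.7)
The plan is to exploit the product structure of both the measure $\mu_{\Pi,\mathbf{p}}$ and the integrand $\bar F$, together with the alignment hypothesis, to reduce Theorem \ref{ThD} to a single elementary step. The condition ``$\Pi'$ coarser than $\Pi$'' means that for each voter $h$ the partition $\pi'_h$ is obtained from $\pi_h$ by (possibly) merging blocks, and any such coarsening is a composition of elementary moves, each of which alters a single $\pi_h$ by replacing two of its blocks $M,M'$ with $M\cup M'$. Telescoping then reduces us to the case where $\Pi$ and $\Pi'$ differ at just one index, say $h=1$, by a single pair-merge.

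Since the coordinates $L_2,\ldots,L_m$ are independent of $L_1$ under both $\mu_\Pi$ and $\mu_{\Pi'}$ and share the same marginal distribution, I would condition on them. For fixed $L_2,\ldots,L_m$, set $H_l^0 := \{h\ge 2 : l\in L_h\}$, $a_l := f_l(H_l^0)$, and $b_l := f_l(H_l^0\cup\{1\})$. Alignment at $h=1$, together with $F\ge 0$, forces either $0\le a_l\le b_l$ for every $l$ (``up'' case) or $0\le b_l\le a_l$ for every $l$ (``down'' case). The conditional expectation of $\bar F$ reduces to $\mathbb{E}\bigl[g(L_1)\bigr]$ with
\[
g(L_1) \;=\; \prod_{l\in L}\bigl[a_l + (b_l-a_l)\,\mathbf{1}_{l\in L_1}\bigr].
\]

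The key move is to expand $g$ as a nonnegative combination of indicators of events whose probabilities are manifestly monotone under coarsening. In the up case, direct expansion yields $g(L_1) = \sum_{J\subseteq L}\alpha_J\,\mathbf{1}_{J\subseteq L_1}$ with $\alpha_J = \prod_{l\in J}(b_l-a_l)\prod_{l\notin J}a_l\ge 0$; in the down case, rewriting $g(L_1) = \prod_l\bigl[b_l + (a_l-b_l)\,\mathbf{1}_{l\notin L_1}\bigr]$ yields $g(L_1) = \sum_J\beta_J\,\mathbf{1}_{L_1\cap J=\emptyset}$ with $\beta_J\ge 0$. A direct computation gives $\mu_{\pi,p}(J\subseteq L_1) = p^{N(\pi,J)}$ and $\mu_{\pi,p}(L_1\cap J=\emptyset) = (1-p)^{N(\pi,J)}$, where $N(\pi,J)$ counts the blocks of $\pi$ that meet $J$. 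Merging two blocks into one decreases $N(\pi_1,J)$ by exactly $1$ when $J$ meets both, and leaves it unchanged otherwise; since $p_1,1-p_1\in[0,1]$, both probabilities above are nondecreasing under the merge. This delivers $\mathbb{E}_{\pi'_1}[g(L_1)]\ge \mathbb{E}_{\pi_1}[g(L_1)]$ in either case, and integrating back over $L_2,\ldots,L_m$ finishes the argument.

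The main obstacle is identifying the correct dual bases, namely $\{\mathbf{1}_{J\subseteq L_1}\}$ and $\{\mathbf{1}_{J\cap L_1=\emptyset}\}$: these are precisely the events whose probabilities respond monotonically to block-merging, and they are paired naturally with the two branches of the alignment hypothesis. Once the right decomposition is chosen, the rest is bookkeeping. The role of nonnegativity of $F$ beyond alignment is exactly to ensure that, with the alignment sign fixed, the expansion coefficients $\alpha_J$ or $\beta_J$ are themselves $\ge 0$, so pointwise probability increases translate into expectation increases.
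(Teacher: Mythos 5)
Your proof is correct, and while the outer skeleton matches the paper's (telescope over voters and over pairwise block-merges, then condition on the other voters' coin tosses), the key inequality is established by a genuinely different decomposition. The paper conditions additionally on the coins for the remaining blocks $M_3,\ldots,M_k$ of $\pi_1$, aggregates each of the two merged blocks into a single pair of numbers $a_i=\prod_{l\in M_i}f_l(H_l\cup\{1\})$ and $b_i=\prod_{l\in M_i}f_l(H_l)$, and verifies the one-line identity $A-B=p(1-p)(a_1-b_1)(a_2-b_2)c$, with alignment forcing $(a_1-b_1)(a_2-b_2)\ge 0$ --- essentially a two-block covariance computation. You instead expand the entire conditional integrand $g(L_1)=\prod_{l}\bigl[a_l+(b_l-a_l)\mathbf{1}_{l\in L_1}\bigr]$ as a nonnegative combination of the indicators $\mathbf{1}_{J\subseteq L_1}$ (or $\mathbf{1}_{J\cap L_1=\emptyset}$ in the decreasing branch) and use that $\mu_{\pi,p}(J\subseteq L_1)=p^{N(\pi,J)}$ is monotone under block-merging. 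Your route is a bit longer but buys two things: the monotonicity of $N(\pi,J)$ handles an arbitrary coarsening of a single $\pi_h$ in one step, so the inner telescoping over pairwise merges is actually unnecessary in your argument; and it isolates precisely which events' probabilities respond to the coarsening. The paper's computation is more economical and makes the specialization to the Harris inequality ($n=2$, singleton blocks) immediate. Both arguments invoke nonnegativity and alignment at the same point and for the same purpose, namely to get a sign on the expansion coefficients.
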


\begin{proof}
It suffices to prove $E(\Pi^{\prime},\mathbf{p},F)\ge E(\Pi,\mathbf{p},F)$ for
the case where

\begin{itemize}
\item[1)] $\Pi^{\prime}=\{\pi^{\prime}_{h}\}$ and $\Pi=\{\pi_{h}\}$ differ
only for a single $h$, say $h=1$;

\item[2)] $\pi^{\prime}_{1}$ is obtained by combining two parts of $\pi_{1}$,
say the first two, as follows:
\[
\pi_{1}=\{M_{1}, M_{2}, M_{3},\ldots, M_{k} \}, \quad\pi_{1}^{\prime}%
=\{M_{1}\cup M_{2}, M_{3},\ldots, M_{k} \}.
\]

\end{itemize}

Indeed, by iterating 2)~we obtain the inequality for any $\pi^{\prime}_{1}$
coarser than $\pi_{1}$. Now by iterating 1)~we obtain the inequality for any
$\Pi^{\prime}$ coarser that $\Pi$.

Thus we may assume that $\Pi$ and $\Pi^{\prime}$ are as in 1) and 2). Now the
measures $\mu_{\Pi, \mathbf{p}}$ and $\mu_{\Pi^{\prime}, \mathbf{p}}$ involve
the same coin tosses for $\pi_{2},\ldots,\pi_{m}$ and for $M_{3},\ldots,M_{k}$
in $\pi_{1}$. It suffices to show that for each subset $T^{\prime}$ of
$L\times H$ resulting from these tosses, the conditional expectation of $F(T)
= f_{1}(H_{1})\cdots f_{n}(H_{n})$ is higher if we toss a single coin for
$M_{1}\cup M_{2}$ rather than separate coins for $M_{1}$ and $M_{2}$.

To study this we write $\theta(T^{\prime})=(H_{1},\ldots,H_{n})$, and we set
\[
a_{i} = \prod_{l\in M_{i}} f_{l}(H_{l}\cup\{1\}), \; b_{i}= \prod_{l\in M_{i}}
f_{l}(H_{l}), \text{ for } i=1,2;\quad c= \prod_{l\notin M_{1}\cup M_{2}}
f_{l}(H_{l}).
\]
Then the conditional expectations for the single and double toss are,
respectively,
\[
A=[pa_{1}a_{2}+(1-p)b_{1}b_{2}]c, \; B= [p^{2}a_{1}a_{2}+ p(1-p)(a_{1}b_{2} +
b_{1}a_{2})+(1-p)^{2}b_{1}b_{2}]c,
\]
where $p=p_{1}$. Now by an elementary calculation we get
\[
A-B= p(1-p)(a_{1}-b_{1})(a_{2}-b_{2})c.
\]

Since $0\le p\le1$, we have $p(1-p)\ge0$, and since $f_{i}\ge0$, we have
$c\ge0$. Further, since the $f_{i}$ are aligned, either they are all
increasing or all decreasing, at $h=1$. In the first case we have $a_{i}\ge
b_{i}$ and in the second case $a_{i}\le b_{i}$, for $i=1,2$. In either case we
get $(a_{1}-b_{1})(a_{2}-b_{2})\ge0$, which implies $A-B\ge0$.
\end{proof}

\medskip

Theorem \ref{ThC} follows immediately from Theorem \ref{ThD}. \medskip

\begin{proof}
[Proof of Theorem \ref{ThC}]Let $H=\{1,\ldots,m\}$ be the set of voters and
let $L=\{1,\ldots,n\}$ be the set of elections. A voter \emph{turnout},
$T\subset H\times L,$ is the set of pairs $(h,l)$ such that voter $h$ has
voted in election $l$. The staggered schedules $\Pi$ and the voter
probabilities $\mathbf{p} = (p_{h})$ induce a probability measure on voter
turnouts, which is seen to be precisely $\mu_{\Pi, \mathbf{p}}$. We will prove
that if we replace $\Pi$ by a coarser partition then the probability of a
sweep goes up for \emph{every} party.

Fix a party $s$ and define $F=(f_{1},\ldots,f_{n})$ where $f_{i}(H^{\prime})$
is the probability that party $s$ wins election $i$ when $H^{\prime}$ is the
set of votes cast. Then the $f_{i}$ are non-negative and aligned by our
assumptions. Now the probability of a sweep by party $s$ is precisely $E(\Pi,
\mathbf{p},F)$, and so the result follows by Theorem \ref{ThD}.
\end{proof}

\medskip

Theorem \ref{ThD} also gives a new proof  of
the Harris correlation inequality \cite{Harris}. 
For $\mathbf{p}=(p_{1},\ldots, p_{m})$, define a probabilty measure 
and expectation on $\mathcal{H}$ as follows:
\[
\mu(S)=\mu_{\mathbf{p}}(S)=\prod\nolimits_{h\in S}(p_{h})\prod\nolimits_{h\notin S }(1-p_{h}),
\quad E(f,\mu)=\sum\nolimits_{S\in\mathcal{H}} f(S)\mu(S).
\]

\begin{alphatheorem}
\label{ThE} If $f_{1},f_{2}$ are increasing functions on $\mathcal{H}$ then we
have
\begin{equation}
\label{har}E(f_{1}f_{2},\mu) - E(f_{1},\mu)E(f_{2},\mu)\ge0.
\end{equation}

\end{alphatheorem}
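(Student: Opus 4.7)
The plan is to obtain Theorem \ref{ThE} as the smallest nontrivial instance of Theorem \ref{ThD}. Take $n=2$ with $L=\{1,2\}$, so that $F=(f_1,f_2)$, and use the same voter set $H$ and probability vector $\mathbf{p}$ as in the statement. Consider the two extremal partitions of $L$: the finest $\pi=\{\{1\},\{2\}\}$ and the coarsest $\pi'=\{\{1,2\}\}$. Setting $\pi_h=\pi$ for every $h$ gives a tuple $\Pi$, and setting $\pi'_h=\pi'$ for every $h$ gives a tuple $\Pi'$; evidently $\Pi'$ is coarser than $\Pi$ in the sense used by Theorem \ref{ThD}.

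Next I will identify each side of (\ref{har}) with one of these values of $E(\Pi,\mathbf{p},F)$. Under $\mu_{\Pi,\mathbf{p}}$, each voter tosses independent coins for elections $1$ and $2$, so writing $\theta^{-1}(T)=(H_1,H_2)$ the sets $H_1,H_2$ are independent samples from $\mu_{\mathbf{p}}$, which gives $E(\Pi,\mathbf{p},F)=E(f_1,\mu)\,E(f_2,\mu)$. Under $\mu_{\Pi',\mathbf{p}}$, each voter tosses a single coin that decides both elections in lockstep, so $H_1=H_2$ with common distribution $\mu_{\mathbf{p}}$, and therefore $E(\Pi',\mathbf{p},F)=E(f_1 f_2,\mu)$. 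The conclusion of Theorem \ref{ThD}, namely $E(\Pi',\mathbf{p},F)\ge E(\Pi,\mathbf{p},F)$, then reads $E(f_1 f_2,\mu)\ge E(f_1,\mu)\,E(f_2,\mu)$, which is exactly (\ref{har}).

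Two hypotheses of Theorem \ref{ThD} still need to be verified. Alignment is immediate: since each $f_i$ is increasing on all of $\mathcal{H}$ it is increasing at every $h\in H$, so $(f_1,f_2)$ is aligned. Nonnegativity is the only real gap, because Harris assumes only that $f_1,f_2$ are increasing. I would bridge this gap by noting that $E(f_1 f_2,\mu)-E(f_1,\mu)E(f_2,\mu)$ is the covariance of $f_1$ and $f_2$, and is therefore invariant under independently shifting each $f_i$ by a constant; since $H$ is finite I can choose constants $c_1,c_2$ large enough that $f_i+c_i\ge 0$, preserving monotonicity while leaving both sides of the inequality unchanged. I do not expect any deeper obstacle here, since Theorem \ref{ThD} is already tailored to handle $n$ functions and the Harris case is essentially the simplest coupling that this framework produces, namely switching from the independent to the diagonal coupling of two copies of $\mu_{\mathbf{p}}$; the shift reduction is the only step that is not completely automatic.
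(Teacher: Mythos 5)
Your proposal is correct and follows essentially the same route as the paper's own proof: both reduce to nonnegative functions by adding constants (using invariance of the covariance), observe that $(f_1,f_2)$ is then aligned, and apply Theorem \ref{ThD} to the all-singletons partition tuple versus the all-coarse partition tuple, identifying the two expectations with $E(f_1,\mu)E(f_2,\mu)$ and $E(f_1f_2,\mu)$ respectively. No gaps.
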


\begin{proof}
This is due to Harris \cite{Harris}, but it is also an immediate corollary of
the case $n=2$ of Theorem \ref{ThD} as we now explain. First, if we add a
constant to $f_{1}$ or $f_{2}$ then the left side of \eqref{har} is unchanged,
thus we may assume without loss of generality that $f_{1}$ and $f_{2}$ are
non-negative. Evidently then the pair $F=(f_{1},f_{2})$ is aligned and
non-negative. Now let $\Pi=(\pi_{1},\ldots, \pi_{m})$ and $\Pi^{\prime}%
=(\pi^{\prime}_{1},\ldots, \pi^{\prime}_{m})$ where $\pi_{h}=\{\{1\},\{2\}\}$
and $\pi^{\prime}_{h}=\{\{1, 2\}\}$ for all $h$, then we have
\[
E(\Pi^{\prime},\mathbf{p},F)=E(f_{1}f_{2},\mu),\quad E(\Pi,\mathbf{p}%
,F)=E(f_{1},\mu)E(f_{2},\mu).
\]
Since $\Pi^{\prime}$ is coarser than $\Pi$, Theorem \ref{ThD} implies
\eqref{har} as desired.
\end{proof}

Thus Theorem \ref{ThD} can be regarded as an $n$-function generalization of teh Harris inequality.
\section{Discussion\label{Discussion}}

\subsection{Random Voter Turnout \label{Random Turnout}}

A key hypothesis of our model is that the probabilites of turning out to vote
are \emph{independent }across individuals, and also across different polling
dates for any given individual\footnote{Furthermore, while these probabilities
may vary arbitrarily across individuals, they remain constant (across
different polling dates) for each individual.}. This is very much in the
spirit of Penrose \cite{Penrose}, and later Banzhaf \cite{Banzhaf}), both of
whom considered the special case where these probabilities were identically
$1/2.$

The recent \textquotedblleft instrumental theory of turnout\textquotedblright%
\footnote{The theory posits individuals for whom the act of voting is
{instrumental in maximizing }their own \textquotedblleft
payoffs\textquotedblright, though these payoffs may be very broadly defined,
incorporating not only the joy of seeing their party win but also the joy of
participating in the election (alongside the cost of participating).}%
\ provides theoretical underpinnings for the independence assumption. Some of
the analyses in the field are focused on individual rational choice (in an
{exogenously specified }environment), modeled in terms of adaptive learning
(e.g., \cite{Kanazawa}), or utility maximization (e.g., \cite{Odershook}) or
minmax regret (e.g.,\cite{Ferejohn}). Others discuss full-blown strategic
interaction among the voters (and sometimes also candidates) and derive Nash
equilibria of \textquotedblleft participation games\textquotedblright\ (e.g.,
\cite{Ledyard}, \cite{Myerson}, \cite{Palfrey}). These analyses differ
considerably from one another on how individual turnout probabilities are
formed, but they all imply the independence of those probabilities ({see the
surveys in \cite{Dhillon} and }\cite{Geys}). Some recent models (e.g.,
\cite{Pogorelsky}), while maintaining the instrumentality assumption,
introduce the possibility of communication --- broadly defined --- between
candidates, media and voters. This gives rise to correlated equilibria where
the turnout probabilities of voters are no longer independent. (Also, there
are several behavioral models which openly depart from the \textquotedblleft
instrumental theory\textquotedblright\ and directly incorporate correlated
turnout among the voters, such as voting in \textquotedblleft
teams\textquotedblright\ (see \cite{Fedderson}, \cite{Nalebuff} and the
references therein).

The independence hypothesis is {central} to our analysis\footnote{We take it
to be a \emph{behavioral }hypothesis which governs voters' turnout. In other
words, no matter how the behavior might once have originated from rational
(optimal) choice, it has got entrenched as a \textquotedblleft rule of
thumb\textquotedblright\ for a voter (see \cite{Aumann}), which he follows
without bothering to recompute the exact optimal choice, each time the
election scenario changes on account of alterations in the polling schedule.}.
However there is a \textquotedblleft special kind\textquotedblright\ of
correlation that can be admitted in our model (which, while mathematically
{obvious, may be useful both conceptually and for applications}). {As in
Harsanyi (\cite{Harsanyi}), }define the \emph{type} of voter $h\in H$ to
consist of two components: the party $t\in T$ that $h$ will vote for; and the
probability $p_{h}$ with which $h$ will {turn out to vote on \emph{any}
polling date}. {Before the announcement of the polling schedule, assume that
nature picks the vector of voter-types according to some \textit{a priori}
probability distribution on a given set.\footnote{{A \textquotedblleft
move\textquotedblright\ of nature could represent public news that affects
voters' types; or --- alternatively --- independendent (idiosyncratic)
perturbations in every voter's }$p_{h}${.}} }Thus nature's move is \emph{ex
ante}, and every voter's type stays fixed throughout the elections. Now, if
schedule $\Pi$ is coarser than $\Pi^{\prime}$, then the probability of a sweep
is higher under $\Pi$ than under $\Pi^{\prime}$ contingent on \emph{every
}move of nature (by Theorem \ref{ThC}), therefore obviously also in
expectation across all the moves of nature. (However, if nature were to move
\emph{ex post }independently between different poll dates, the argument given
above breaks down.)

\subsection{Electoral systems \label{electoral systems}}

There is a wide variety of election methods in use across the world. For an
extensive survey, covering all the methods we cite below, see \cite{ACE}.

Our analysis applies to first-past-the-post elections that are held in several
countries including India, USA, Canada and UK (the first two of which we shall
discuss below in some detail).

It also applies to elections based on proportional representation, where each
person votes for a single party of his choice, and the percentage of
parliamentary seats accorded to any party equals (as nearly as possible) the
percentage of total votes it received. Of course, for our analysis to be
valid, the method for \textquotedblleft rounding\textquotedblright\ the seats
(into integers) should be \textquotedblleft monotonic\textquotedblright%
\ \footnote{Monotonicity holds for most of the rounding methods used in
practice, e.g., the Highest Average or the Largest Remainder Methods
\cite{ACE}} in the obvious sense, so that condition (b) is not violated; and,
furthermore, condition (a) must also hold (e.g., the party with the most seats
is deemed to win; {or, alternatively, if no party gets a strict majority of
the seats, it is a non-win for every party}). Such elections are in use ---
with minor modifications --- in many countries and also in the European
Parliament (see, again, \cite{ACE} for details).

There is, however, a category of elections that lies outside the ambit of our
model, and it includes the Two Round System (used in the French Presidential
Election), Single Transferable Vote, Alternative Vote, Supplementary Vote,
Borda Count and so on (described in detail in \cite{ACE}). In all these cases,
{a vote is tantamount to a ranking }of the {electoral candidates}. Such
elections have received the lion's share of attention in the theoretical
literature on \textquotedblleft social choice\textquotedblright\ (SC)\ ever
since Arrow's pioneering monograph \cite{Arrow}. (For a succinct survey, see,
e.g., \cite{Fishburn}.) The concerns of the SC literature are quite different
from ours, but even from a \textquotedblleft high level\textquotedblright,
there are salient differences. First, our voter only specifies his top choice
instead of ranking the candidates. More importantly, in SC, the entire voter
population is {assumed to always} {be present}. In sharp contrast, it is the
\emph{variable, stochastic turnout} of voters on which{ our analysis turns}.
It might be interesting to examine, along the lines of this paper, how the
probability distribution of electoral outcomes depends on the stochastic
turnout, in the context of the more sophisticated elections considered in SC
(e.g., Borda Count, see \cite{Maskin}), but that would be a topic for another paper.

\subsection{Party Alliances \label{alliances}}

Consider a partition of parties into sets, each of which constitutes an
electoral alliance. Define the \emph{type }of an alliance to be
\emph{pre-poll} if the parties in the alliance jointly field one candidate in
every election, and do not contend against one another; and to be
\emph{post-poll} if they contend every election on their own, but come
together afterwards by pooling their elected candidates. Our analysis will
remain intact if conditions (a) and (b) hold, viewing each alliance as a
single contender in all the elections. This is easily seen to be so in
elections based on first-past-the post criterion or on proportional
representation, provided the type of each alliance is fixed across all polling
schedules (i.e., either it is pre-poll for all schedules, or post-poll for
all). To check (b) in the presence of post-poll alliances (the only scenario
where it is not immediately obvious), note that if a vote is cast in favor of
an alliance it does not benefit any of the \emph{rival} alliances, and
therefore cannot hurt the alliance either, since the total number of elected
candidates is fixed in every election.

However, if the set of parties that form an alliance, or the type of an
alliance, varies across elections, or across the polling schedules, or with
the electoral turnout, then our analysis breaks down.

\subsection{Single-Party Sweeps \label{Sweeps}}

A single-party sweep (also known as \textquotedblleft one-party
monopoly\textquotedblright\ or \textquotedblleft one-party
dominance\textquotedblright) is a widely prevalent phenomenon, both in local
and national elections (\cite{Parry}, \cite{Pempel}, \cite{Solinger}) Its
causes have been discussed from different standpoints, including: the
strategic timing of elections ( \cite{Kayser}), economic intervention
(\cite{Kayser}), manipulation of electoral rules (\cite{McElwain} ), and
fragmented opposition (\cite{Ziegfield}). There does not seem to be much in
the literature connecting polling schedules to single-party sweeps, which is
the focus of this paper. Two notable exceptions are the \textquotedblleft
coattail effect\textquotedblright\ in the context of US elections (see, e.g.,
\cite{Calvert, Campbell, Meredith}). and a recent study
(\cite{Balasubramaniam}) of the Indian elections. {Both analyses are based on}
the postulate that a voter's behavior \emph{changes }when elections are held
simultaneously, on account of either the popularity of the Presidential
candidate or the \textquotedblleft salience\textquotedblright\ of one of the
political parties {(see the next two sections)}. In sharp contrast, voters'
behavior is \emph{fixed} in our model, regardless of the polling schedule or
the characteristics of the candidates standing for election. Nevertheless the
voter turnout\textbf{ }varies with the schedule and drives our result.

\subsection{US Elections and The Coattail Effect \label{Coattail}}

The data from USA shows that a single party is more likely to win multiple
elections when they are held simultaneously.{ Indeed, during Congressional
elections in USA that coincide with the presidential race, the party of a}
popular president tends to win more seats in Congress compared to elections
held in the midterm of the Presidency (the latest instance of this being the
US elections in November 2024). This phenomenon is referred to as the
\textquotedblleft coattail\textquotedblright\ effect. {The rationale is that a
popular }presidential candidate fosters additional wins for his party in
Congress, with many members of Congress voted into office \textquotedblleft on
the coattails\textquotedblright\ of the president.

Our analysis brings to light a completely different mechanism for a
single-party sweep, which is not at odds with the coattail effect, but
independent of it. Two behavioral causes are commonly postulated for the
coattail effect: a popular presidential candidate can mobilize each member in
his party base to turn out to vote with higher probability; and, moreover, he
can sway undecided voters to his party's fold.\footnote{A cause of a different
nature is the \textquotedblleft anti-incumbency\ factor\textquotedblright\ so
often alluded to by political analysts. It can come into play during midterm
elections, especially if there is widespread perception that the ruling party
has been unable to fulfil its electoral promises. For then the rival party
makes gains in the midterm elections, further accentuating the benefit of
simultaneous elections for the ruling party.} These causes are ruled out in
our model since the party that $h$ will vote for, as well the\emph{
}probability $p_{h}$ that he will vote on any poll date, are both exogenously
fixed for every $h\in H$ and not susceptible to change. It is purely
\textquotedblleft the statistics of voting\textquotedblright\ {that enhances
the single-party sweep for simultaneous elections}.\footnote{{We could
incorporate the \textquotedblleft coattail\textquotedblright\ in our model as
an increase in the probabilities of turnout of the adherents of the
President's party in (at least) all other contemporaneous Congressional
elections (while the turnout for its rivals stay fixed or go down). This will
obviously increase the probability of win for the party in every election. In
this sense, our model is accomodative of the coattail effect.}}

The probability of a single-party sweep can, of course, be very small if there
are many parties or many elections. It becomes significant when, as is often
the case, there are two parties contending two (or three) elections. To the
extent that \textquotedblleft Duverger's Law\textquotedblright\ operates and a
representative democracy settles into a two-party system, the sweep can be
quite a robust phenomenon \cite{Duverger, Fey, Riker}

\subsection{Indian Elections: \textquotedblleft One Nation, One Election\textquotedblright\ and
\textquotedblleft Double Engine  Governance\textquotedblright\ \label{ONOE}}

Elections in India are held as follows. Political parties contend for seats in
the Lok Sabha at the national level and also in $28$ Vidhan Sabhas, one for
each state. The country (resp., state) is partitioned into electoral districts
for the Lok (resp., Vidhan) Sabha election, the candidate with the most votes
is elected from each district, and the party with the most candidates is
deemed to have won the election.\footnote{W.l.o.g. we may assume that the
entire electorate of India participates in each Vidhan Sabha election, viewing
voters outside the state as \textquotedblleft strategic
dummies\textquotedblright\ whose votes do not count {(since condition (b)
still holds for such dummies.)}}

The possibility of simultaneous elections is no longer theoretical but it
looms large in India and is the subject of a fierce ongoing debate, set into
motion by the BJP\ and its allies (currently in power in the Lok Sabha)
seeking a Constitutional Amendment to implement their \textquotedblleft One
Nation, One Election\textquotedblright\ (ONOE)\ proposal (which would make all
Vidhan Sabha elections simultaneous with the Lok Sabha election). At the same
time, the BJP has been strongly campaigning for \textquotedblleft
double-engine\textquotedblright\ governance, especially during elections in
the states. Theorem \ref{ThC} shows that the likelihood of one party winning
the Lok Sabha and the Vidhan Sabha of any given state is higher when both
elections are simultaneous compared to when they are not, i.e., under ONOE,
the probability of \textquotedblleft double-engine\textquotedblright%
\ governance goes up for every state. {This happens by way of a }%
\emph{systemic change}{, no matter which party wins the Lok Sabha.} {The
magnitude of the change can, in general, be quite significant\footnote{{We
thank Vijay Vazirani for asking this question.}}}. Indeed consider two
parties, BJP and its allies versus the \textquotedblleft
INDIA\textquotedblright\ coalition of all their rivals (as in the 2024
elections) contending the Lok Sabha at the Center and the Vidhan Sabha in
Bihar. For ease of computation assume that each contender is favored by half
the population of India, and of Bihar; and that the turnout probability is
$1/2$ {for every voter}$.$ {Then, by the law of large numbers, the probability
of a sweep ( i.e., of double-engine governance) rises from about }$50\%$ {to
about }$100\%$ {when the polling schedule of the two elections is made
simultaneous (and rises from }$25\%$ {to }$50\%$ {for each contender). }

The study in \cite{Balasubramaniam} refers to the pronounced correlation in
the Indian data between simultaneous elections and single-party sweeps (over
those elections). To explain it, the authors posit a model of
behaviorally-constrained voters for whom information acquisition is costly. If
a party is salient in the Lok Sabha elections, then the electorate has a
higher probability to also vote for it in any contemporaneous Vidhan Sabha
election, rather than acquire costly information about its rivals which could
alter that decision. Such is not the case if the Vidhan Sabha election is held
at a very different time, for then the \textquotedblleft salience
factor\textquotedblright\ is missing and information acquisition becomes less
costly. By fixing the type of a player during elections, we have ruled out
this kind of varition in voter behavior. Our model highlights an alternative,
purely stastistical cause of the correlation.

Finally recall that the notion of \textquotedblleft win\textquotedblright\ is
very general in our model and can vary across elections, subject only to
conditions (a) and (b). As mentioned in the introduction\textbf{, }this
enables us to consider Indian elections from diverse viewpoints, including the
following two starkly different ones: $1+28=29$ \textquotedblleft
literal\textquotedblright\ elections, or $2$ \textquotedblleft
figurative\textquotedblright\ elections (with the $28$ Vidhan Sabha elections
viewed as one conglomorate). The literal elections clearly fit into our model,
but here the likelihood of a sweep is so miniscule that it is unlikely for
either party to be focused on it. Sweeps occur with higher ptobability in the
$2$-election framework, where \textquotedblleft win\textquotedblright\ in
Vidhan Sabhas can be defined in diverse ways, e.g., (i) winning a specific set
of states\footnote{{In particular, the set could consist of a single state, in
which case a sweep reduces to the \textquotedblleft double-engine
governance\textquotedblright\ discussed earlier. More generally, in theory if
not in practice, the contending parties may have different sets of states in
their sights but, so long as the intersection of their sets is non-empty,
condition (a) holds and our analysis remains intact. }}; (ii) winning most
states overall; (iii) winning most seats overall (relevant for the future
Rajya Sabha); and so on. All these scenarios are covered by Theorem \ref{ThC},
which allows for staggered polling schedules (i.e., personal partitions of the
elections in $L)$.


\begin{thebibliography}{99}                                                                                               %


\bibitem {ACE}ACE: The Electoral Knowledge Network: Electoral sytems,
(1998-2024). \textit{https://aceproject.org/ace-en/topics/es/esd/default}.

\bibitem {Alon-Spencer}Alon, N. \& J. Spencer (2016). The Probabilistic
Method, \textit{Fourth Edition, John Wiley \& Sons, Inc., Hoboken, New
Jersey.}

\bibitem {Arrow}Arrow, K. (1951). Social Choice and Individual Values,
\textit{Yale University Press, 2nd edition 1963, 3rd edition 2012.}

\bibitem {Aumann}Aumann, R.J. (2019) A Synthesis of Behavioral and Mainstream
Economics, \textit{Nature Human Behavior 3 (7), 666-670.}

\bibitem {Banzhaf}Banzhaf, J.F. (1965). Weighted Voting Doesn't Work: A
Mathematical Analysis, \textit{Rutgers Law Review Vol.19, No. 2, 317-343.}

\bibitem {Balasubramaniam}Balasubramaniam, V., A.V. Bhatiya \& S. Das (2020).
Behavioral Voters in Synchronized Elections: Evidence from India,
\textit{https://ssrn.com/abstract= 3636183}.

\bibitem {Bertoa}Bertoa, F.C. \& J. Rama (2021). Polarization: What do we know
and what can we do about it? \textit{Frontiers in Political Science, Sec:
Elections and Representations, Vol 3, https://www.frontiersin.org%
$>$%
fpos.2021.687695%
$>$%
full}

\bibitem {Calvert}Calvert, R.L. \& J.A. Ferejohn (1983). Coattail Voting in
Recent Presidential Elections, \textit{The American Political Science Review,
Vol. 77, No. 2, 407-419.}

\bibitem {Campbell}Campbell, J.E. \& J.A. Summers (1990). Presidential
Coattails in Senate Elections, \textit{The American Political Science Review,
Vol. 84, No. 2, 513-524.}

\bibitem {Dhillon}Dhillon, A. \& S. Peralta (2002). Economic Theories of Voter
Turnout. \textit{Economic Journal, 112, 332-352.}

\bibitem {Dubey}Dubey, P., S. Sahi \& G. Wang (2024). Putting all eggs in one
basket: insights from a correlation inequality, \textit{arXiv 2403.15957}.

\bibitem {Duverger}Duverger, M. (1954). Political Parties: Their Organization
and Activity in the Modern State, \textit{Trans. Barbara and Robert North},
\textit{John Wiley \& Sons, New York}.

\bibitem {Fedderson}Fedderson, T.J. (2004). Rational Choice Theory and the
Paradox of Not Voting, \textit{Journal of Economic Perspectives, Vol.18, No.1,
99-112.}

\bibitem {Ferejohn}Ferejohn, F.A. \& M.P. Fiorina (1974). The Paradox of Not
Voting: A Decision Theoretic Analysis, \textit{American Political Science
Review, Vol. 4, 16-35.}

\bibitem {Fey}Fey, M. (1997). Stability and Coordination in Duverger's Law: A
Formal Model of Preelection Polls and Strategic Voting, \textit{The American
Political Science Review, Vol. 91, No. 1, 135-147.}

\bibitem {Fishburn}Fishburn, P.C. (1982). Monotonicity Paradoxes in the Theory
of Elections, \textit{Discrete Applied Mathematics} \textit{4, 119-134.}

\bibitem {FKG}Fortuin, C.M., J.~Ginibre \& P.W.~Kasteleyn (1971). Correlation
inequalities on some partially ordered sets, \textit{Commun.~Math.~Phys. 22
(2), 89-103.}

\bibitem {Geys}Geys, B. (2006). Rational Theories of Voter Turnout: a Review,
\textit{Political Studies Review, Vol. 4, 16-35.}

\bibitem {Harris}Harris, T.E. (1960). A Lower Bound for the Critical
Probability in a Certain Percolation Process, \textit{Mathematical Proceedings
of the Cambridge Philosophical Society Vol. 56, No. 1, 13-20.}

\bibitem {Harsanyi}Harsanyi. J.C. (1967-68). Games with Incomplete Information
Played by Bayesian Players, \textit{Management Science 14, 159-181, 320-334,
486-502}.

\bibitem {Hetzel}Hetzel, G \& K. Laurin (2020) Polarization in America: two
possible futures, \textit{Current Opinion in Behavioral Sciences, Vol. 34, pp
179-184.}

\bibitem {Kanazawa}Kanazawa, S. (2000). A New Solution to the Collective
Action Problem: the Paradox of Voter Turnout, \textit{American Sociological
Review, 65, 433-442.}

\bibitem {Kayser}Kayser, M.A. (2005). Who Surfs, Who Manipulates? The
Determinants of Opportunistic Election Timing and Electorally Motivated
Economic Intervention, \textit{American Political Science Review, Vol. 99, No.
1, 17-27.}

\bibitem {Ledyard}Ledyard, J.O. (1984). The Pure Theory of Large Two-Candidate
Elections. \textit{Public Choice, Vol. 44, 7-41.}

\bibitem {LiebSahi}Lieb, E. \& S. Sahi (2022). On the extension of the FKG
inequality to $n$ functions, \textit{J. Math. Phys. 63, 043301}

\bibitem {Maskin}Maskin, E. (2025). Borda's Rule and Arrow's Independence
Condition, \textit{Journal of Political Economy, Vol. 133, No.2.}

\bibitem {McElwain}McElwain, K.M. (2008). Manipulating Electoral Rules to
Manufacture Single-Party Dominance, \textit{American Journal of Political
Science, Vol. 52, No.1, 32-47.}

\bibitem {Meredith}Meredith, M. (2013). Exploiting Friends-and-Neighbors to
Estimate Coattail Effects. \textit{The American Political Science Review, Vol.
107, No. 4, 753-766.}

\bibitem {Myerson}Myerson, R. (2000). Large Poisson Games, \textit{Journal of
Economic Theory, Vol. 94, 7-45.}

\bibitem {Palfrey}Palfrey, T.R. \& H. Rosenthal (1985).Voter Participation and
Strategic Uncertainty. \textit{American Political Science Review, Vol. 79,
62-78.}

\bibitem {Parry}Parry, J.A., A.J. Dowdle, A.B. Long, \& J.R. Kloss (2022). The
Rule, Not the Exception: One-Party Monopolies in the American States,
\textit{State Politics and Policy Quarterly, Vol. 22, No. 2, 226-245.}

\bibitem {Pempel}Pempel, T.J. (editor) (1990). Uncommon Democracies: the
One-Party Dominant Regimes, \textit{Cornell University Press, Ithaca, New
York.}

\bibitem {Penrose}Penrose, L. (1946). The Elementary Statistics of Majority
Voting, \textit{Journal of the Royal Statistical Society Vol. 109, No. 1,
53-57.}

\bibitem {Pogorelsky}Pogorelskiy, K. (2014). Correlated Equilibria in Voter
Turnout Games, \textit{APSA Annual Meeting Paper, papers.ssrn.com.}

\bibitem {Riker}Riker, W. (1982). The Two-Party System and Duverger's Law: An
Essay on the History of Political Science, \textit{The American Political
Science Review, Vol. 76, 742-765.}

\bibitem {rich}Richards, D. (2004). Algebraic methods toward higher order
probability inequalities. II, \textit{Ann. of Probability 32 (2), 1509-1544.}

\bibitem {sahi}Sahi, S. (2008). Higher correlation inequalities,
\textit{Combinatorica 2 (2), 209-227.}

\bibitem {sahi2}Sahi, S. (2008). The FKG inequality for partially ordered
algebras, \textit{J. Theoret. Probab. 21 no. 2, 449--458.}

\bibitem {sahi3}Sahi, S. (2009). Correlation inequalities for partially
ordered algebras, \textit{The mathematics of preference, choice and order,
361--369.} \textit{Studies in Social Choice and Welfare, Springer, Berlin}

\bibitem {Sahoo}N. Sahoo (2020). Mounting majoritarianism and political
polarization in India, \textit{Political Polarization in South and Southeast
Asia, 61, 9-13.}

\bibitem {Nalebuff}Shachar, R. \& B. Nalebuff (1999). Follow the Leader:
Theory and Evidence on Political Participation, \textit{American Economic
Review, Vol. 89, No. 3, 525-547.}

\bibitem {Solinger}Solinger, D.J. (2001). Ending One-Party Dominance: Korea,
Taiwan, Mexico, \textit{Journal of Democracy, MUSE, John Hopkins University,
US (muse.jhu.edu).}

\bibitem {Odershook}Odershook, P. \& W. Riker (1968). A Theory of the Calculus
of Voting, \textit{American Political Science Review, Vol. 62, 25-42.}

\bibitem {Ziegfield}Zeigfield, A. \& M.Tudor (2001). How Opposition Parties
Sustain Single-Party Dominance: Lessons from India, \textit{Party Politics
Vol. 23, No. 3, journals.sagepub.com}
\end{thebibliography}
\end{document}